\documentclass[10pt]{article}
\usepackage{amsmath, amsthm}

\usepackage{fancyhdr}

\title {\textbf{Hall  numbers of some complete $k-$partite
graphs
}}
\date{}
\newtheorem{theorem}{Theorem}

\begin{document}

\begin{center}
\textbf{\Large Hall  numbers of some complete $k-$partite graphs}

\addvspace{\bigskipamount} Julian A. Allagan \\ School of Science Technology Engineering and Mathematics, \\
Gainesville State College,  Watkinsville, GA- 30677, USA 
aallagan@gmail.com \\
\end{center}


\begin{abstract} The Hall number is a graph parameter closely related to the choice number. Here it is shown
that the Hall numbers of the complete multipartite graphs
$K(m,2,\ldots,2)$, $m\ge 2$, are equal to their choice numbers.
\end{abstract}

\section{Introduction}
Throughout this paper, the graph $G=(V,E)$ will be a finite simple
graph with vertex set $V=V(G)$ and edge set $E=E(G)$.

 A \emph{list assignment} to the graph $G$ is a
function $L$ which assigns a finite set (list) $L(v)$ to each vertex
$v \in V(G)$.

A {\it proper $L-$coloring} of $G$ is a function $\psi: V(G)\to$
$\displaystyle \bigcup_{v\in V(G)}{L(v)}$ satisfying, for every $u$,
$v$ $\in V(G)$,
\begin{itemize}
\item [(i)] $\psi(v)$ $\in L(v)$,
\item [(ii)] $ uv \in E(G) \to\psi(v) \neq \psi(u)$.
\end{itemize}

The \emph{choice number} or {\it list$-$chromatic number of G},
denoted by $ch(G)$, is the smallest integer $k$ such that there is
always a proper $L-$coloring of $G$ if $L$ satisfies $|L(v)|\geq k$
for every $v$ $\in V(G)$. With $\chi$ denoting the chromatic number,
it is easy to see, and well known, that $\chi(G)\le ch(G)$. The
extremal equation $\chi(G)= ch(G)$ is a major research interest; see
\cite{Eno-Et}, \cite{Erd-Rub}, and \cite{Grav}.

\subsection{Hall's Theorem}

\begin{theorem}(\textnormal{P. Hall \cite{Hall}}). Suppose $A_1,A_2,\ldots,A_n$ are (not necessarily distinct) finite  sets.
There exist distinct elements $a_1, a_2,\ldots,a_n $ such that
$a_i\in A_i$, $i = 1, 2,\ldots, n$, if and only if for each $J
\subseteq \{1, 2,\ldots , n\}$, \\ $ \displaystyle | \bigcup_ {j\in
J} A_j | \ge |J|$.
\end{theorem}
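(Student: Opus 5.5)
Necessity is immediate. Suppose distinct representatives $a_1,\ldots,a_n$ exist, with $a_i\in A_i$. For any $J\subseteq\{1,\ldots,n\}$, the elements $a_j$, $j\in J$, are $|J|$ distinct elements of $\bigcup_{j\in J}A_j$. Hence $|\bigcup_{j\in J}A_j|\ge |J|$.

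For sufficiency, I will argue by strong induction on $n$. The case $n=1$ holds because $|A_1|\ge 1$. For $n>1$, I split into two cases according to whether the condition holds with slack on every proper nonempty subfamily.

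\emph{Case 1.} Suppose $|\bigcup_{j\in J}A_j|\ge |J|+1$ for every nonempty $J\subsetneq\{1,\ldots,n\}$. Pick any $a_n\in A_n$, which is possible since $|A_n|\ge 1$. Remove $a_n$ from the other sets, setting $A_i'=A_i\setminus\{a_n\}$ for $i<n$. For any nonempty $J\subseteq\{1,\ldots,n-1\}$ the union loses at most one element, so $|\bigcup_{j\in J}A'_j|\ge |J|$. By induction the sets $A'_1,\ldots,A'_{n-1}$ have distinct representatives. None of them equals $a_n$, so adding $a_n$ completes the system.

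\emph{Case 2.} Suppose some nonempty $J_0\subsetneq\{1,\ldots,n\}$ is \emph{critical}, that is, $|\bigcup_{j\in J_0}A_j|=|J_0|$. Put $U=\bigcup_{j\in J_0}A_j$. The subfamily indexed by $J_0$ inherits Hall's condition, so by induction it has distinct representatives, and these use up all of $U$. For $i\notin J_0$, set $A_i''=A_i\setminus U$. I must check Hall's condition for this family. Take $K\subseteq\{1,\ldots,n\}\setminus J_0$. Then
\[
\Bigl|\bigcup_{k\in K}A_k''\Bigr| = \Bigl|\bigcup_{j\in K\cup J_0}A_j\Bigr|-|U| \ge |K|+|J_0|-|J_0| = |K|.
\]
By induction the family $\{A''_i\}_{i\notin J_0}$ has distinct representatives, and these avoid $U$. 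Combining the two systems gives distinct representatives for the whole family.

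The main obstacle is Case 2. The key point is that the deficiency computation works exactly because $U$ is entirely consumed by the critical subfamily. I expect no further difficulty.
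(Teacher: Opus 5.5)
Your proof is correct. It is the standard Halmos--Vaughan induction, which splits on whether some nonempty proper subfamily is critical, i.e.\ $|\bigcup_{j\in J_0}A_j|=|J_0|$. Both cases check out:
\begin{itemize}
\item In Case 1, every nonempty $J\subseteq\{1,\dots,n-1\}$ is a proper subset of $\{1,\dots,n\}$. So the assumed slack of one absorbs the possible loss of $a_n$.
\item In Case 2, the identity $|\bigcup_{k\in K}A_k''|=|\bigcup_{j\in K\cup J_0}A_j|-|U|$ holds, together with $|U|=|J_0|$ and $K\cap J_0=\emptyset$. These give Hall's condition for the residual family.
\item Both subfamilies have fewer than $n$ members because $J_0$ is nonempty and proper, so the strong induction hypothesis applies to each.
\end{itemize}

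The paper gives no proof of this theorem at all. It quotes the result from Hall's paper as a black box. Its only accompanying reasoning is the dictionary that a proper $L$-coloring of $K_n$ is exactly a system of distinct representatives of the lists. It also observes that $\alpha(H_\sigma)\in\{0,1\}$ for induced subgraphs $H$ of $K_n$, so that condition (3.1) collapses to the union condition. That is how it obtains Theorem 2 and $h(K_n)=1$. There is therefore no competing route to compare. Your argument supplies a self-contained proof of what the paper imports, and through the same dictionary it proves Theorem 2 as well.

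One small point of emphasis concerns the step you call the key point. The Case 2 count works because of the equality $|U|=|J_0|$ itself. That the representatives of the $J_0$-subfamily exhaust $U$ is a consequence of this equality, not what drives the computation, and it is not actually needed later. The two partial systems are disjoint simply because the first lies inside $U$ and the second avoids $U$.
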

A list of distinct elements $a_1,\ldots,a_n$ such that $a_i\in A_i$,
$i=1,\ldots,n$, is called a \emph{\underline{system of distinct
representatives}} of the sets $A_1,\ldots, A_n$. A proper
$L-$coloring of a complete graph $K_n$ is simply a system of
distinct representatives of the finite lists $L(v)$, $v\in V$, and
any list $A_1,A_2,\ldots,A_n$ of sets can be regarded as lists
assigned to $K_n$. Therefore, as noted in \cite{HJ}, Hall's theorem
can be restated as:

\begin{theorem}(\textnormal{Hall's theorem restated}). Suppose that $L$ is a list assignment to $K_n$. There is a proper
$L-$coloring of $K_n$ if and only if, for all $U\subseteq V(K_n)$,
$|L(U)| = \displaystyle | \bigcup_ {u\in U} L(u) | \ \ge \ |U|$.
\end{theorem}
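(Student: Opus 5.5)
The plan is to derive the restated form directly from Hall's theorem (the first theorem above) by identifying proper $L$-colorings of $K_n$ with systems of distinct representatives. Label the vertices of $K_n$ as $v_1,\ldots,v_n$ and set $A_i = L(v_i)$. Since every pair of distinct vertices of $K_n$ is adjacent, a function $\psi$ with $\psi(v_i)\in L(v_i)$ is a proper $L$-coloring exactly when the values $\psi(v_1),\ldots,\psi(v_n)$ are pairwise distinct. This holds because condition (ii) applies to every pair $u\neq v$. So proper $L$-colorings of $K_n$ correspond bijectively to systems of distinct representatives $a_i=\psi(v_i)$ of $A_1,\ldots,A_n$.

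Next, translate the index condition into the vertex condition. Subsets $J\subseteq\{1,\ldots,n\}$ correspond bijectively to subsets $U=\{v_j : j\in J\}\subseteq V(K_n)$, with $|U|=|J|$. Under this correspondence we have $\bigcup_{j\in J}A_j=\bigcup_{u\in U}L(u)=L(U)$. Hence the condition ``$|\bigcup_{j\in J}A_j|\ge |J|$ for all $J$'' is literally the condition ``$|L(U)|\ge|U|$ for all $U$''.

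Combining the two steps, a proper $L$-coloring of $K_n$ exists if and only if an SDR of $A_1,\ldots,A_n$ exists. By Hall's theorem, that happens if and only if $|L(U)|\ge |U|$ for every $U\subseteq V(K_n)$. The case $U=\emptyset$ is trivial on both sides.

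There is no real obstacle here. The only point requiring care is checking that properness on a complete graph is exactly injectivity of $\psi$, which uses only condition (ii) and the completeness of the edge set. The lists need not be distinct as sets, which Hall's theorem explicitly allows.
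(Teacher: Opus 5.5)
Your proposal is correct and takes essentially the same approach as the paper. The paper justifies the restatement by observing that a proper $L$-coloring of $K_n$ is exactly a system of distinct representatives of the lists $L(v)$; you make that identification, and the correspondence $J \leftrightarrow U$ between index sets and vertex sets, explicit.
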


Let $L$ be a list assignment to a simple graph $G$, $H$ a subgraph
of $G$ and $\mathcal{P}$ a set of possible colors. If $\psi: V(G)\to
\mathcal{P}$ is a proper $L-$coloring of $G$, then for any subgraph
$H\subset G$, $\psi$ restricted to $V(H)$ is a proper $L-$coloring
of $H$.

For any $\sigma \in \mathcal{P}$, let $H(\sigma, L)= \ < \{v\in
V(H)\ |\ \sigma \in L(v)\} >$ denote the subgraph of $H$ induced by
the support set $\{v\in V(H)\ |\ \sigma \in L(v)\}$. For
convenience, we
 sometimes simply write $H_\sigma $.

For each $\sigma \in \mathcal{P}$, $\psi^{-1}(\sigma)=\{v\in V(G)\
|\ \psi(v)=\sigma \}\subseteq V(G_\sigma)$; $\psi^{-1}(\sigma)$ is
an independent set because $\psi$ is a proper $L-$coloring. Further,
$\psi^{-1}(\sigma)\cap V(H) \subseteq V(H_\sigma)$. So,
$|\psi^{-1}(\sigma)\cap V(H)|\le \alpha(H_\sigma)$ where $\alpha$ is
the vertex independence number. This implies that

$\displaystyle \sum_{\sigma \in \mathcal{P}}{\alpha(H_\sigma)} \ge
\sum_{\sigma \in \mathcal{P}}|\psi^{-1}(\sigma)\cap V(H)|=|V(H)|$
for all $H\subseteq G $.

\vspace{.3in}

When $G$ and $L$ satisfy the inequality $$\sum_{\sigma \in
\mathcal{P}}{\alpha(H_\sigma)} \ge |V(H)| \ \ \ \ \ \ \ \ \ \ \ \
(3.1)$$ for each subgraph $H$ of $G$, they are said to satisfy
\emph{\textbf{Hall's condition}}. By the discussion preceding,
Hall's condition is a necessary condition for a proper $L-$coloring
of $G$. Because removing edges does not diminish the vertex
independence number, for $G$ and $L$ to satisfy Hall's condition it
suffices that (3.1) holds for all induced subgraphs $H$ of $G$.

Hall's condition is sufficient for a proper coloring when $G=K_n$,
because if $H$ is an induced subgraph of $K_n$ then for each $\sigma
\in \mathcal{P}$,
\begin{displaymath} \alpha(H_\sigma)= \left\{
\begin{array}{ll}
1 &  \ if \ \sigma \in\displaystyle\bigcup_ {v\in V(H)}L(v)\\
 0, & \ otherwise.
\end{array} \right. \end{displaymath}

 So $$\sum_{\sigma \in \mathcal{P}}{\alpha(H_\sigma)} = |\bigcup_ {v\in
V(H)}L(v)| \ ;$$ therefore Hall's condition, that $$\sum_{\sigma \in
\mathcal{P}}{\alpha(H_\sigma)}\ \ge \ |V(H)|$$ for every such $H$,
is just a restatement of the condition in Theorem 2. (It is
necessary to point out here that if $\sigma \notin L(v)$ for all
$v\in V(H)$ then $H_\sigma$ is the null graph, and
$\alpha(H_\sigma)=0$.) Consequently, Hall's theorem may be restated:
For complete graphs, Hall's condition on the graph and a list
assignment suffices for a proper coloring.

The temptation to think that there are many graphs for which Hall's
condition is sufficient can be easily dismissed. Figure $\ref{A list
assignment to K(2,2)}$ is the smallest graph with a list assignment
$L_0$ for which Hall's condition holds, and yet $G$ has no proper
$L_0-$coloring.\vspace{.1in}

\textbf{Remark.}

It is clear that if $H$ is an induced subgraph of $G$ and $H\ne G$,
then $H\subseteq G-v$ for some $v\in V(G)$. So, if $G-v$ has a
proper $L-$coloring, then $H\subseteq G-v$ must satisfy (by
necessity) (3.1). Thus, in practice, in order to show that $G$ and
$L$ satisfy Hall's condition, it suffices to verify that $G-v$ is
properly $L-$colorable for each $v\in V(G)$ and that $G$ itself
satisfies the inequality (3.1).

\vspace{.1in}
 Denoted by $h(G)$, the \textbf{\emph{Hall number}} of a graph $G$ is the
 smallest positive integer $k$ such that there is a proper $L-$coloring of $G$, whenever $G$ and $L$
 satisfy Hall's condition and $|L(v)|\ge k$ for each $v\in
 V(G)$. So, by Theorem 2, $h(K_n)=1$ for all $n$. In \cite{HJ} the following facts are shown:

\begin{itemize}
  \item[\textbf{1.}]If $|L(v)|\ge \chi(G)$ for every $v \in V(G)$ then $G$
  and $L$ satisfy Hall's Condition.
  \item[\textbf{2.}] $h(G)\le ch(G)$ for every $G$.
  \item[\textbf{3.}]If $ch(G)> \chi(G)$ then $h(G)=ch(G)$.
  \item[\textbf{4.}] If $h(G)\le \chi(G)$ then $\chi(G)=ch(G)$.
  \item[\textbf{5.}] If $H$ is an induced subgraph of $G$ then $h(H)\le
  h(G)$.
\end{itemize}

Facts 3 and 4, are essentially equivalent since $\chi$, $h \le ch$,
make $h$ a parameter of interest of study of the extremal equation $
\chi(G)=ch(G)$. These facts and the following theorems underline our
findings in the next section.

\vspace{.2in}

\textbf{Theorem A.}(Erd\"{o}s, Rubin and Taylor \cite{Erd-Rub}) Let
$G$ denote the complete $k-$partite graph $K(2,2,\ldots,2)$. Then
$ch(G)= k.$

\vspace{.2in}

\textbf{Theorem B.}(Gravier and Maffray \cite{Grav}) Let $G$ denote
the complete $k-$partite graph $K(3,3,2,\ldots,2)$. If $k>2$, then
$ch(G)=k$.

\vspace{.2in}

When $k=2$, it is shown that $ch(K(3,3))=3$. See \cite{Hof-Pet}.

\vspace{.2in}

\textbf{Corollary B.} Let $G$ denote the complete $k-$partite graph
$K(3,2,\ldots,2)$. Then $ch(G)=k$.

\begin{proof} Since $K(3,2\ldots,2)$ is a complete $k-$partite graph,\\ $k=\chi(K(3,2\ldots,2))\le ch(K(3,2\ldots,2))$.
 Further, $K(3,2\ldots,2)$ is a subgraph of the complete $k-$partite graph $K(3,3,2,\ldots,2)$.
 Therefore \\ $ch(K(3,2\ldots,2))\le k$ if
 $k>2$. Thus, $ch(K(3,2\ldots,2))=k$ if
 $k>2$. When $k=2$, we have $K(3,2)$, of which it is well known that
 the choice number is 2. See \cite{Hof-Pet}, for instance.

 \end{proof}

 \vspace{.1in}

\textbf{Theorem C.} ( Enomoto  et al. \cite{Eno-Et},2002) Let $G_k$
denote the complete $k-$partite graph $ K(4,2,\ldots,2)$. Then
\begin{displaymath} ch(G_k)= \left\{
\begin{array}{ll}
k &  \ if \ k \ is \ odd \\
k+1 & \ if \ k \ is \ even.
\end{array} \right. \end{displaymath}

\vspace{.1in}

\textbf{Theorem D.} ( Enomoto  et al. \cite{Eno-Et}) Let $G$ denote
the complete $k-$partite graph $ K(5,2,\ldots,2)$. If $k\ge 2$ then
$ch(G)=k+1$.

\vspace{.1in}

\textbf{Corollary D.} Let $G$ denote the complete $k-$partite graph
$ K(m,2,\ldots,2)$. If $k\ge 2$ and $m\ge 5$, then $h(G)=ch(G) \ge
k+1$.

 \begin{proof} Since $ch(G)\ge ch(K(5,2\ldots,2))=k+1
>k =\chi(G)$, $h(G)=ch(G)$ by the previous fact 3.

 \end{proof}

\section{Hall numbers of some complete multipartite graphs}

Throughout this section, $L$ is a list assignment to $V(G)$ such
that for each $v\in V(G)$, $L(v)\subset \mathcal{P}$, a set of
symbols. If $\sigma \notin L(v)$ for all $v\in V(G)$, then
$G_\sigma$ is the null graph. Further, we denote by $\psi$, any
attempted proper $L-$coloring of $G$.

\subsection{Example}
The following example originally appeared in \cite{HJ}. Consider the
complete bipartite graph $K(2,2)$ in Figure $\ref{A list assignment
to K(2,2)}$ with parts $V_i=\{u_i,v_i\}$, $i=1,2$ and $L_0$ the list
assignment indicated.

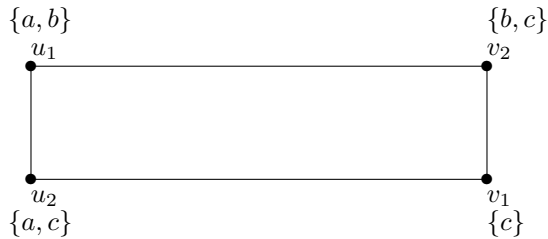
\begin{figure}[h]
  \begin{center}
\setlength{\unitlength}{1.5mm}
\begin{picture}(60,25)
\put(10,10){\circle*{1}} \put(10,20){\circle*{1}}
\put(50,10){\circle*{1}} \put(50,20){\circle*{1}}
\put(10,20){\line(1,0){40}} \put(10,10){\line(1,0){40}}
\put(10,21){$u_1$} \put(50,21){$v_2$} \put(10,8){$u_2$}
\put(50,8){$v_1$}
\put(8,23.5){$\{a,b\}$} \put(50,23.5){$\{b,c\}$}
\put(8,5.5){$\{a,c\}$} \put(50,5.5){$\{c\}$}
\put(30,10.25){\line(0,1){0}} \put(50,20){\line(0,-1){10}}
\put(10,20){\line(0,-1){10}}
\end{picture}
\end{center}
\caption{A list assignment to K(2,2).}\label{A list assignment to
K(2,2)}
\end{figure}

If $v_1$ is colored $c$, as it must be, then $u_2$ must be colored
$a$ and $v_2$ must be colored $b$ in a proper coloring, so $u_1$
cannot be properly colored.

However, we will show that $G$ and $L_0$ satisfy Hall's condition
using the argument described in a previous remark. First, for each
$v\in V(G)$, it is easy to see that $G-v$ is properly
$L_0-$colorable, meaning every proper induced subgraph $H\subset G$
satisfies, with $L_0$, the inequality (3.1) in Hall's condition. We
now proceed to verify the inequality (3.1) for $G$ itself.

Now, $\alpha(G_c)=2$ and $\alpha(G_b)=\alpha(G_a)=1$. So,
$4=\displaystyle \sum_{\sigma \in \mathcal{P}}{\alpha(G_\sigma)} \ge
|V(G)|=4$. Thus, $G$ and $L_0$ satisfy Hall's condition and yet $G$
has no proper $L_0-$coloring. Thus, $1< h(G)\le 2$ by Fact 2 and
Theorem A. Therefore, $h(G)=2$.

\subsection {Some Hall numbers }

\begin{theorem}
$h(K(2,\ldots,2))= k$ when $k\ge 2$.
\end{theorem}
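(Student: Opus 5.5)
By Fact 2 and Theorem A, $h(K(2,\ldots,2))\le ch(K(2,\ldots,2))=k$, so the whole content is the lower bound $h(G_k)\ge k$, where $G_k$ denotes the complete $k$-partite graph $K(2,\ldots,2)$. For this I would construct, for each $k\ge 2$, a list assignment $L_k$ with $|L_k(v)|\ge k-1$ for every $v$, such that $G_k$ and $L_k$ satisfy Hall's condition but $G_k$ has no proper $L_k$-coloring. The base case $k=2$ is the Example: $L_0$ on $K(2,2)$ has all lists of size at least $1$, satisfies Hall's condition, and admits no proper coloring.

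For the inductive step I would take $(G_{k-1},L_{k-1})$ and add a new part $V_k=\{u_k,v_k\}$ and a fresh symbol $x_k$. Every old list gets $x_k$ added, so old lists grow to size at least $k-1$. The new vertices receive lists of size $k-1$ built from $x_k$ and old symbols. The design goal is the following dichotomy for any attempted coloring $\psi$.
\begin{itemize}
\item[(a)] If $x_k$ is used on the new part, then $x_k$ is unavailable to the old part. The restriction of $\psi$ to $G_{k-1}$ is then an $L_{k-1}$-coloring, which does not exist.
\item[(b)] If $x_k$ is not used on the new part, then the new vertices use old symbols. The old graph may then use $x_k$ on at most one part, since $x_k$ must appear on an independent set.
\end{itemize}
For (b), I would choose the new lists as $L_k(u_k)=L_k(v_k)=\{x_k\}\cup S$, where $S$ is a set of $k-2$ symbols chosen to match the colors forced in the old obstruction. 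The aim is that using a symbol of $S$ on $V_k$, together with $x_k$ on one old part, reproduces an uncolorable configuration in the spirit of the $K(2,2)$ example. My first attempt would be to take $S$ to be the colors forced onto $V_1$-type vertices in the base example, propagated through the induction.

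Hall's condition I would verify exactly as in the Remark. First, for each vertex $w$ I exhibit a proper $L_k$-coloring of $G_k-w$. When $w$ is old, give $x_k$ to one new vertex and color the rest using the inductive colorings of $G_{k-1}-w$. When $w$ is new, give $x_k$ to the other new vertex and reuse a near-coloring of the old graph. Second, I check inequality (3.1) for $G_k$ itself. Here $\alpha((G_k)_{x_k})=2$, and the old contribution is at least $2(k-1)$ by induction applied to (3.1) for $G_{k-1}$. This gives $\sum_\sigma \alpha((G_k)_\sigma)\ge 2k$, which is what (3.1) requires.

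The main obstacle is case (b): choosing $S$ so that a new part colored from $S$, combined with $x_k$ placed on a single old part, still leaves the old graph uncolorable, without destroying the colorability of every $G_k-w$. If the naive choice of $S$ fails, I would instead give the two new vertices different lists, each containing $x_k$. These would be modeled on the lists $\{a,b\},\{a,c\}$ of the base example, so that any non-$x_k$ choice on $V_k$ forces a chain of colors leading to a blocked vertex.
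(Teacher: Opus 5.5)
\textbf{There is a genuine gap: the lower bound is never constructed.} Your upper bound is the paper's (Fact 2 and Theorem A), and your check of (3.1) for $G_k$ itself is sound. But the lower bound, which is the whole content, is missing: the new lists are left unspecified, and you yourself flag case (b) as unresolved.

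\textbf{The concrete choices you sketch fail.}

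\emph{Equal new lists.} If $L_k(u_k)=L_k(v_k)$, then $u_k,v_k$ are non-adjacent twins with identical lists. Any proper coloring of $G_k-u_k$ therefore extends to $G_k$ by giving $u_k$ the color of $v_k$. So if $G_k$ is uncolorable, $G_k-u_k$ is uncolorable too, and the Remark-based check of Hall's condition cannot go through.

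\emph{Coloring $G_k-w$ for a new vertex $w$.} Putting $x_k$ on the surviving new vertex removes $x_k$ from every old vertex, since all of them are adjacent to it. That leaves $G_{k-1}$ with exactly the lists $L_{k-1}$, which is uncolorable by the inductive hypothesis. The surviving new vertex must instead take an old symbol $t$, with $x_k$ placed on some old part $V_j$. This requires $G_{k-1}-V_j$ to be colorable from $L_{k-1}\setminus\{t\}$ for some $j$.

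\emph{Tension with case (b).} Case (b) requires $G_{k-1}-V_j$ to be \emph{un}colorable from $L_{k-1}\setminus\{s,t\}$ for every $j$ and every admissible pair $(s,t)$. Your inductive hypothesis (uncolorability of $G_{k-1}$, colorability of each $G_{k-1}-w$ from the full lists) says nothing about these $(k-2)$-partite subgraphs with symbols deleted. So the induction as framed cannot close.

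\emph{Smaller points.} For an old $w$ you should put $x_k$ on \emph{both} new vertices. Also, colorability of each $G_{k-1}-w$ must be carried explicitly in the hypothesis, since Hall's condition does not imply it.

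\textbf{How the paper does it, and how to rescue your framework.} The paper avoids all this by writing down explicit lists for $k\ge 3$. With $|A|=k-2$ it takes $L(u_1)=A\cup\{a,b\}$, $L(u_i)=A\cup\{a\}$ for $2\le i\le k-1$, $L(u_k)=A\cup\{c\}$, $L(v_1)=A\cup\{b,c\}$, and $L(v_i)=A\cup\{b\}$ for $i\ge 2$. It then checks directly that $G$ is uncolorable, that (3.1) holds for $G$, and that each $G-v$ is colorable.

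Your framework can be completed. Take $L_3(u_3)=\{x_3,a\}$ and $L_3(v_3)=\{x_3,c\}$ on top of $L_0$, and then $L_k(u_k)=L_{k-1}(u_{k-1})\cup\{x_k\}$ and $L_k(v_k)=L_{k-1}(v_{k-1})\cup\{x_k\}$. This is best verified as an explicit assignment. With $A=\{x_3,\dots,x_k\}$ the lists are:
\begin{itemize}
\item $A\cup\{a,b\}$ and $A\cup\{c\}$ on $u_1,v_1$;
\item $A\cup\{a,c\}$ and $A\cup\{b,c\}$ on $u_2,v_2$;
\item $A\cup\{a\}$ and $A\cup\{c\}$ on $u_i,v_i$ for $i\ge 3$.
\end{itemize}
Here $\sum_\sigma\alpha(G_\sigma)=2k$ exactly, so in any proper coloring every color class is a maximum independent set of $G_\sigma$. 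Hence the $c$-class is $V_2$ and each $A$-class is a whole part. So $v_1,v_3,\dots,v_k$ must take colors from $A$, and the $k-1$ parts $V_1,V_3,\dots,V_k$ would need distinct colors from a set of size $k-2$, which is impossible. The colorability of each $G-w$ is then checked case by case, as in the paper.
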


 \begin{proof}

Let the partite sets of the complete $k-$partite graph $G=
K(2,\ldots,2)$ be  $V_1,\ldots , V_k$ with $V_i=\{u_i , v_i\}$, for
$i=1,2,\ldots,k$.

In Example 2.1, we showed that $h(G)=k$ when $k=2$. So, to complete
the proof, we suppose $k\ge 3$.

Let $A$ be a nonempty set of colors with $|A|=k-2$ and $a,b,c$ be
distinct colors not in A. We define $L$ a list assignment to $G$ as
follows:

\begin{itemize}
  \item[1.]$L(u_1)=A\cup\{a,b\}$, $L(u_2)=L(u_3)=\ldots =L(u_{k-1})= A\cup\{a\}$, $L(u_k)=A\cup\{c\}$
   and
  \item[2.] $L(v_1)=A\cup\{b,c\}$, $L(v_2)=L(v_3)=\ldots =L(v_{k})= A\cup\{b\}$.
\end{itemize}
Observe that $|L(v)|\ge k-1$ for every $v\in V(G)$.

\vspace{.2in}

 \textbf{Claim 1.} \emph{
 The graph $G$ is not properly $L-$colorable.}

\vspace{.2in}

\textbf{ Proof.}

In the following cases, we consider all possible distinct ways to
properly color the vertices of some part of $G$, say $V_1$ . We then
conclude that the remaining subgraph $H= G-V_1$ is not proper
$L'$-colorable where $L'=L-\{\alpha_1, \alpha_2\}$, $\displaystyle
\{\alpha_1, \alpha_2\} \in \bigcup _{v\in V_1}L(v)$. ($\alpha_1,
\alpha_2$ are not necessarily distinct colors; they are the colors
on $V_1$.) Let $\psi$ denote the attempted proper coloring.

 \textbf{Case 1:} $\psi(u_1)=b$ or $\psi(v_1)=b$.

Let $S=<\{v_2, \ldots, v_k\}>$, an induced subgraph of $H$. Then
$k-2=|A|=\displaystyle |\bigcup_{v \in V(S)}L'(v)|< |V(S)|=k-1$.
Since the subgraph $S$ is a clique, we cannot properly color $S$
from $L'$.

\textbf{Case 2:} $\psi(u_1)=a$ and $\psi(v_1)=c$.

Similarly as described in case 1, by letting $S=<\{u_2, \ldots,
u_k\}>$, it's clear that we cannot properly color $S$, from $L'$.

\textbf{Case 3:} $\psi(u_1)=\gamma$ or $\psi(v_1)=\gamma$ for some
color $\gamma \in A$.

With $S$ as in case 1, $k-2=\displaystyle |\bigcup_{v \in
V(S)}L'(v)|< |V(S)|=k-1$. Hence we cannot properly color  $H$ from
$L'$.

\vspace{.2in}

\textbf{Claim 2.} \emph{ $ \displaystyle \sum_{\sigma\in
\mathcal{P}}\alpha(G_\sigma)\ge |V(G)|$.}

\vspace{.2in}

\textbf{Proof.}

 It is clear that $\alpha(G_a)=\alpha(G_c)=1, \alpha(G_b)=2$;
further, $\alpha(G_\sigma)=2$ for every $\sigma \in A$. Hence $
\displaystyle \sum_{\sigma\in \mathcal{P}}\alpha(G_\sigma)= 2(k-2) +
4= 2k = |V(G)|$.

\vspace{.2in}

\textbf{Claim 3.} \emph{Every proper induced subgraph $H$ of $G$ is
properly $L-$colorable.}

\vspace{.2in}

 \textbf{Proof.}

 In the following cases we provide a (not necessarily unique)
proper coloring for each induced subgraph $H$ of $G$ of the form
$G-v$, $v\in V(G)$.

\textbf{Case $1$}: $H=G-u_1$.

Let $\psi(v_1)=c$ and color the $2(k-2)$ vertices of the subgraph
$G-(V_1\cup V_2)$ with the colors from $A$ (by coloring vertices of
the same part with the same color). Then let $\psi(u_2)=a$ and $
\psi(v_2)=b$.

 \textbf{Case $2$}: $H=G-v_1$.

Let $\psi(u_1)=a$ and color the $2(k-2)$ vertices of the subgraph
$G-(V_1\cup V_k)$ with the colors from $A$ with the same color
appearing on $u_i$ and $v_i$, $i=2,\ldots, k-1$. Then, let
$\psi(u_k)=c$ and $ \psi(v_k)=b$.

\textbf{Case $3$}: $H=G-u_i$, for some $2\le i \le k$.

Let $\psi(v_i)=b$ and color the remaining $2(k-2)$ vertices of the
subgraph $G-(V_i\cup V_1) $ with the colors from $A$. Then, let
$\psi(u_1)=a$ and $ \psi(v_1)=c$.

\textbf{Case $4$}: $H=G-v_i$, for some $2\le i \le k-1$.

Let $\psi(u_i)=a$ and color the remaining $2(k-2)$ vertices of the
subgraph $G-(V_i\cup V_1)$ with the colors from $A$. Then, let
$\psi(u_1)= \psi(v_1)=b$.

\textbf{Case $5$}: $H= G-v_k $.

Let $\psi(u_k)=c$ and color the $2(k-2)$ vertices of the subgraph
$G-(V_1\cup V_k)$ with the colors from $A$. Finally, let $\psi(u_1)=
\psi(v_1)=b$.

From the previous claims, we can conclude that $h(G)>k-1$. Thus, by
Theorem A and Fact 2, $h(G)=k$. This concludes the proof.
 \end{proof}

\vspace{.2in}
 \textbf{Corollary 3:} $h(K(3,2\ldots,2))=k=h(K(3,3,2\ldots,2))$ for $k>2$.
 \begin{proof}
 From Theorem 3, fact 5 and Theorem B, $k=h(K(2,2\ldots,2))\le \\
 h(K(3,2\ldots,2))\le h(K(3,3,2\ldots,2))\le ch(K(3,3,2\ldots,2))=k$. Thus,
$h(K(3,2\ldots,2))=k=h(K(3,3,2\ldots,2))$.
 \end{proof}
 We note that when $k=2$, $h(K(3,2))=2$ since $2=h(K(2,2))\le h(K(3,2))\le ch(K(3,2))=2$ by Corollary B.
Also,  since $ch(K(3,3))=3$ by \cite{Hof-Pet}, it is clear from Fact
3 that $h(K(3,3))=3$.

\begin{theorem}
Let $G$ denote the complete $k-$partite graph

$ K(4,2,\ldots,2)$ with $k\ge 2$. Then
\begin{displaymath} h(G)= \left\{
\begin{array}{ll}
k &  \ if \ k \ is \ odd \\
k+1 & \ if \ k \ is \ even.
\end{array} \right. \end{displaymath}
\end{theorem}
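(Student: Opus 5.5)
The plan is to split according to the parity of $k$ and combine Theorem C with the facts from \cite{HJ} and Theorem 3, so that almost no new list constructions are needed.

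\emph{Case $k$ even.} By Theorem C, $ch(G)=k+1>k=\chi(G)$, since $G=K(4,2,\ldots,2)$ is complete $k$-partite. Fact 3 then gives $h(G)=ch(G)=k+1$ immediately.

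\emph{Case $k$ odd.} Here Theorem C gives $ch(G)=k$, so Fact 2 yields the upper bound $h(G)\le k$. For the lower bound, observe that $K(2,2,\ldots,2)$ with $k$ parts is an induced subgraph of $K(4,2,\ldots,2)$: delete two vertices from the part of size $4$. Fact 5 and Theorem 3 then give $k=h(K(2,\ldots,2))\le h(G)$. Hence $h(G)=k$. Theorem 3 requires $k\ge 2$, and every odd $k$ in range satisfies $k\ge 3$, so this is fine.

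\emph{Degenerate case $k=2$ and caveats.} When $k=2$ the graph is $K(4,2)$, which is even, so the claim is $h=3$. This needs $ch(K(4,2))=3$, which is the $k=2$ instance of Theorem C; I will check that the citation of Theorem C covers $k=2$. If it does not, I would fall back on the known fact that $K(4,2)$ is not $2$-choosable, exhibited by the standard list assignment with lists $\{1,2\},\{3,4\}$ on the part of size two and all four cross pairs on the part of size four. Together with Fact 3 this again gives $h=3$, and the upper bound $ch(K(4,2))\le 3$ is standard.

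There is no real obstacle. The only points needing care are that $\chi(G)=k$ for complete $k$-partite graphs and that the inclusion of $K(2,\ldots,2)$ in $G$ is induced, so that Fact 5 applies.
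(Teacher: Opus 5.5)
Your proof is correct, and for odd $k$ it takes a genuinely different and much shorter route than the paper. The even case, including $k=2$, matches the paper's argument: $\chi(G)=k<k+1=ch(G)$ by Theorem C, so Fact 3 gives $h(G)=ch(G)$. The paper treats $k=2$ as covered by Theorem C, so your explicit non-$2$-choosability witness for $K(4,2)$ is a valid safety net but not needed.

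The difference is in the odd case.

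\emph{The paper's route.} It builds explicit list assignments on $K(4,2,\ldots,2)$ itself with $|L(v)|=k-1$. One family, built from $C_1,C_2$, the symbol $0$ and the splits $A=A_1\cup A_2$, $B=B_1\cup B_2$, works for $k\ge 5$. For $k=3$, where that family breaks down at $G-x_3$, it uses a separate ad hoc assignment. In each case it verifies three things by case analysis: $G$ is not $L$-colorable, inequality (3.1) holds for $G$, and every $G-v$ is $L$-colorable.

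\emph{Your route.} You inherit the lower bound $h(G)\ge k$ from the induced subgraph $K(2,\ldots,2)$ via Fact 5 and Theorem 3. This is exactly the argument the paper itself uses in Corollary 3 for $K(3,2,\ldots,2)$.

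\emph{What each buys.} Your argument avoids all the case checking and the special handling of $k=3$. The paper's construction gives a witness that lives on $K(4,2,\ldots,2)$ and is vertex-critical: Hall's condition holds, $G$ is not colorable, and every proper induced subgraph is colorable. The natural witness behind your argument is Theorem 3's assignment with fresh private colors on the two extra vertices of the large part. It is not vertex-critical, because deleting one of those extra vertices still leaves an uncolorable graph. For determining the value of $h(G)$ this distinction is irrelevant, so your proof fully establishes the theorem.
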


 \begin{proof}

 When $k$ is even, from Theorem B we have that $k=\chi(G)<ch(G)=k+1$. Thus, from Fact 3, it
is clear that $h(G)=ch(G)=k+1$ for all even $k\ge 2$.

Suppose $k\ge 3$ is odd.

Let the partite sets, or parts, $V_1,V_2,\ldots,V_k$ of the complete
$k-$partite graph $G$ be $V_1=\{x_1,x_2,x_3,x_4\}$ and $V_i=\{u_i ,
v_i\}$, $i=2,\ldots,k$, $k\ge 2$.

Let $C_1$ and $C_2$ be disjoint $k-2$ sets of colors and $0$ an
object not in $C_1\cup C_2$. Let $A=C_1\cup \{0\}$, $B=C_2\cup
\{0\}$. Let $A_1$, $A_2$ and $B_1$, $B_2$ be disjoint $(k-1)/2$ sets
of colors partitioning $A$ and $B$ respectively, and let $0\in
A_2\cap B_2$. Let $a$,$b$ be distinct objects not in $A\cup B$.
Define a list assignment $L$ to $G$ as follows:

\begin{itemize}
  \item[1.]$L(u_2)=A$, $L(v_2)=B$,
  $L(u_i)=C_1\cup \{a\}$ and $L(v_i)=C_2\cup \{b\}$,  for every  $3\leq i\leq k$  and
  \item[2.] $L(x_1)= A_1\cup B_1$, $L (x_2)= A_1 \cup B_2$, $L(x_3)= A_2\cup B_1$
   and $L(x_4)= A_2\cup B_2\cup \{a\}$
\end{itemize}

 Notice that $|L(v)|=k-1$ for every $v\in V(G)$.

 \vspace{.2in}

\textbf{Claim 1.} \emph{G is not properly $L-$colorable.}

\vspace{.2in}

 \textbf{Proof.}

 Every proper $L-$coloring of $G-V_1$ = $K (2,\ldots,
2)$ uses $k-1$ elements of $C_1\cup \{0,a\}$ and $k-1$ elements of
$C_2\cup \{0,b\}$. We proceed by exhausting the possible cases in
attempts to properly $L-$color $G$.

 \textbf{Case $1$}: suppose $\psi(u_2)\ne 0 \ne \psi(v_2)$.
 Then all  of the colors of $C_1\cup C_2\cup \{a,b\}$ will be
 used to color $G-V_1$. Hence we cannot
 color $x_1$ (since $A_1\cup B_1 \subset C_1\cup C_2$).

\textbf{Case $2$}: suppose $\psi(u_2)=\psi(v_2)=0$

\textbf{Case $2.1$}: $\psi(u_i)\ne a$ and $\psi(v_i)\ne b$ for every
$3\leq i\leq k$.

 Then all  of the colors of $C_1\cup C_2$ will be
 used to color $G-(V_1\cup V_2)$. Once again we cannot
 color $x_1$.

 \textbf{Case $2.2$}: $\psi(u_i)= a$ and $\psi(v_j)= b$ for some $i,j\ne 2$.

 Then there remains exactly one color, say $c_1\in C_1$ and exactly one color, say $c_2\in
 C_2$. If $c_1\in A_1$ and $c_2\in B_1$, then we cannot color $x_4$.
 Likewise if $c_1\in A_1$ and $c_2\in B_2$, then we cannot color
 $x_3$. Also if $c_1\in A_2$ and $c_2\in B_1$, $x_2$ cannot be
 colored and if $c_1\in A_2$, $c_2\in B_2$, $x_1$ cannot be colored.

 \textbf{Case $2.3$}: $\psi(u_i)\ne a$ for all $i\ne 2$ and $\psi(v_j)= b$ for some $j\ge 3$.
 Then there remains exactly  one color, say $c_2\in C_2$ and none of $
 C_1$. As in the previous case, if $c_2\in B_1$, then we cannot color $x_2$.
 Likewise if $c_2\in B_2$, then we cannot color either of $x_1$ and $x_3$.

 \textbf{Case $2.4$}: $\psi(u_i)= a$ for some $i\ge 3$ and $\psi(v_j)\ne b$ for all $j\ge 3$.
 Then there remains exactly  one color, say $c_1\in C_1$ and none of $
 C_2$. As before, if $c_1\in A_1$, then we cannot color either of $x_3$ and $x_4$.
 Likewise if $c_1\in A_2$, then we cannot color either of $x_1$ and $x_2$.

\textbf{Case $2.5$}: $\psi(u_i)\ne a$ and $\psi(v_j)\ne b$ for all
$3\le i,j\le k$. Clearly the coloring cannot be properly extended to
any of $x_1, x_2, x_3$.

 Notice that we can skip the case
where $\psi(u_2)= 0$ and $\psi(v_2)\ne 0$ (or vice versa), since if
there is a proper $L-$coloring with one of $u_2$, $v_2$ colored with
$0$, then there is a proper $L-$coloring with both colored $0$.

 From the previous cases we can conclude that $G$ is not properly $L-$
 colorable.

\vspace{.2in}

 \textbf{Claim 2.} \emph{$ \displaystyle
\sum_{\sigma\in \mathcal{P}}\alpha(G_\sigma)\ge |V(G)|$.}

\textbf{Proof.}

 Notice that $\alpha(G_\sigma)=2$ for every $\sigma \in
C_1\cup C_2$. Also $\alpha(G_0)=3$ and $\alpha(G_a)=\alpha(G_b)=1$.
Hence $\displaystyle \sum_{\sigma\in
\mathcal{P}}\alpha(G_\sigma)=2(2(k-2))+5 = 4k-3\ge 2k+2 = |V(G)|$
for every $k\ge 3$.

\vspace{.2in}

\textbf{Claim 3.} \emph{If $k\ge 5$, then every proper induced
subgraph $H$ of $G$ is properly $L-$colorable.}

\vspace{.2in}

\textbf{Proof.}

 We proceed by considering the possible subgraphs of $G$ obtained by deleting a single vertex.

\textbf{Case $1$}: $H=G- u_i$, for some $i$.

Let $\psi(x_2)= \psi(x_3)=\psi(x_4)=0$. Color $ G-V_1$ with the
colors from $C_1\cup C_2\cup \{a,b\}$ (colors $a, b$ included).
Hence there remains exactly one unused color of $C_1$, say $c_1$,
and arrange that $c_1\in A_1$. Let $\psi(x_1)=c_1$.

\textbf{Case $2$}: $H=G- v_i$, for some $i$. Following the coloring
argument in the previous case, there remains exactly one unused
color of $C_2$, say $c_2$, and arrange that $c_2\in B_1$. Let
$\psi(x_1)=c_2$.

\textbf{Case $3$}: $H=G- x_1$. Let $\psi(x_2)=
\psi(x_3)=\psi(x_4)=0$. It is easy to see that we can color the
remaining subgraph $ G- V_1$ with the colors from $C_1\cup C_2\cup
\{a,b\}$ ($a, b$ included).

\textbf{Case $4$}: $H=G- x_2$. Let $\psi(u_2)= \psi(v_2)=0$, and
$\psi(x_4)=a$. Color the vertices of $ G-(V_1\cup V_2)$ with the
colors from $C_1\cup C_2\cup \{b\}$ ($b$ included). Then there
remains exactly one unused color of $C_2$, say $c_2$, and arrange
that $c_2\in B_1$. Let $\psi(x_1)= \psi(x_3)=c_2$.

\textbf{Case $5$}: $H=G- x_4$. Let $\psi(u_2)= \psi(v_2)=0$. Color
the vertices of $ G-(V_1\cup V_2)$ with the colors from $C_1\cup
C_2\cup \{a,b\}$ ($a,b$ included). Then there remains exactly one
unused color of $C_1$, say $c_1$, and arrange that $c_1\in A_1$, and
exactly one unused color of $C_2$, say $c_2$, and arrange that
$c_2\in B_1$. Let $\psi(x_1)=c_1=\psi(x_2)$ and $ \psi(x_3)=c_2$.

\textbf{Case $6$}: $H=G- x_3$. Let $\psi(u_2)= \psi(v_2)=0$. Color
the vertices of $ G-(V_1\cup V_2)$ with the colors from $C_1\cup
C_2\cup \{a,b\}$ ($a,b$ included). Then there remains exactly one
unused color of $C_1$, say $c_1$, and arrange that $c_1\in A_1$, and
exactly one unused color of $C_2$, say $c_2$, and arrange that
$c_2\in B_2$. Let $\psi(x_1)=c_1=\psi(x_2)$ and $\psi(x_4)=c_2$.

Notice here that when $k=3$, $A_2= B_2=\{0\}$. Therefore, the
attempted coloring of $H=G-x_3$ in case 6 fails, and, in fact $H$ is
not properly $L-$ colorable. However, $H=G-x_3$ with the given list
assignment $L$ satisfies the inequality (3.1). We can safely end the
proof here when $k=3$.

Still, there follows a list assignment specifically for the case
when $k=3$, which we hope will be of interest.

We define a list assignment $L$  to $G=K(4,2,2)$ as follows:

\begin{itemize}
  \item[1.]$L(u_2)=\{1,0\}$, $L(v_2)=\{2,0,c\}$, $L(u_3)=\{1,a\}$, $L(v_3)=\{2,b\}$
   and
  \item[2.] $L(x_1)=\{1,2\}$, $L(x_2)=\{1,0\}$, $L(x_3)=\{0,a\}$ and $L(x_4)=\{b,c\}$
\end{itemize}
It is easy to verify that $G$ and $L$ satisfy the previous claims
$1$ and $2$. We proceed therefore to verify only claim $3$ for the
subgraphs $H$ of $K(4,2,2)$ in the following cases.

\vspace{.1in}

\textbf{Case$1$}: $H=G-u_2$.

Let $\psi(v_2)=2, \psi(u_3)=a, \psi(v_3)=b$. Also
$\psi(x_2)=0=\psi(x_3), \psi(x_1)=1$ and $\psi(x_4)=c$.

\textbf{Case$2$}: $H=G- v_2$.

Let $\psi(u_2)=1, \psi(u_3)=a, \psi(v_3)=b$. Also
$\psi(x_2)=0=\psi(x_3), \psi(x_1)=2$ and $\ \ \  \ \ \ \ \psi(x_4)=c
$.

\textbf{Case$3$}: $H=G- u_3$.

Let $\psi(u_2)=\psi(v_2)=0, \psi(v_3)=b$. Also
$\psi(x_1)=1=\psi(x_2), \psi(x_3)=a$ and $\psi(x_4)=c$

\textbf{Case$4$}: $H=G- v_3$.

Let $\psi(u_2)=1, \psi(v_2)=c, \psi(u_3)=a$. Also $ \psi(x_1)=2,
\psi(x_2)=0=\psi(x_3)$ and $\ \ \ \ \    \ \psi(x_4)=b $.

\textbf{Case$5$}: $H=G- x_1$.

Let $\psi(u_2)=1$, $\psi(v_2)=2$, $\psi(u_3)=a$ and$\psi(v_3)=b$ .
Also let $ \psi(x_1)=0=\psi(x_2)$ and $\psi(x_4)=c $.

\textbf{Case$6$}: $H=G- x_2$.

Let $\psi(u_2)=0= \psi(v_2), \psi(u_3)=1$ and$\psi(v_3)=b$ . Also $
\psi(x_1)=2,\psi(x_3)=a$ and $\psi(x_4)=c $.

\textbf{Case$7$}: $H=G- x_3$.

Let $\psi(u_2)=0= \psi(v_2), \psi(u_3)=a$ and$\psi(v_3)=b$ . Also $
\psi(x_1)=1=\psi(x_2)$ and $\psi(x_4)=c $.

 \textbf{Case$8$}: $H=G- x_4$.

Let $\psi(u_2)=1,\psi(v_2)=c,\psi(u_3)=a$ and $\psi(v_3)=b$ . Also
$\psi(x_1)=2 $ and  $ \psi(x_2)=0=\psi(x_3)$.


We conclude that $G$ and $L$ satisfy Hall's Condition. So, $k\le
h(G)\le ch(G)=k$ by Fact 2 and Theorem B. Therefore, $h(G)=k$ for
all $k\ge 3$ odd.

 \end{proof}

\vspace{.1in}
 \textbf{Corollary 4:} For $m\ge 2$, $k\ge
2$, $h(K(m,2\ldots,2))=ch(K(m,2\ldots,2))$.

\vspace{.1in}

 \begin{proof} This follows from Corollaries D and 3, and
Theorems C, D, 3 and 4.
 \end{proof}

\vspace{.1in}

 \textbf{Conjecture:} If $G$ is a complete multipartite
graph with all parts of size greater than 1, then $h(G)=ch(G)$.

\vspace{.1in}

 Since $h(K_n)=1 < n =ch(K_n)$, the conclusion of the
conjecture fails if parts of size 1 are allowed. Since $h(G)=ch(G)$
whenever $\chi(G)<ch(G)$, and since $\chi(G)<ch(G)$ for "most"
complete multipartite graphs $G$ with part sizes greater than 1,
with Theorems 3 and 4 we may be within shouting distance of
confirming the conjecture.


\bigskip
\vspace{.3in}

 \centerline{\bf Acknowledgement}
\bigskip
The author expresses his gratitude to Professor Peter Johnson Jr for
communicating this problem and encouraging this work.

\end{document}